\newtheorem{theorem}{Theorem}[section]
\newtheorem{corollary}[theorem] {Corollary}
\newtheorem{definition}[theorem]{Definition}
\newtheorem{remark}[theorem]{Remark}
\title{This is the title}
\begin{document}
\hrule\hrule\hrule\hrule\hrule
\vspace{0.3cm}	
\begin{center}
{\bf{CONTINUOUS  KRISHNA-PARTHASARATHY ENTROPIC UNCERTAINTY PRINCIPLE}}\\
\vspace{0.3cm}
\hrule\hrule\hrule\hrule\hrule
\vspace{0.3cm}
Dedicated to the memory of Prof. K. R. Parthasarathy
\vspace{0.3cm}
\hrule\hrule\hrule
\vspace{0.3cm}
\textbf{K. MAHESH KRISHNA}\\
Post Doctoral Fellow \\
Statistics and Mathematics Unit\\
Indian Statistical Institute, Bangalore Centre\\
Karnataka 560 059, India\\
Email: kmaheshak@gmail.com\\

Date: \today
\end{center}

\hrule\hrule
\vspace{0.5cm}
%--------------------------------------
\textbf{Abstract}: In 2002, Krishna and Parthasarathy [\textit{Sankhy\={a} Ser. A}] derived discrete quantum  version of Maassen-Uffink [\textit{Phys. Rev. Lett., 1988}] entropic uncertainty principle. In this paper, using the notion of continuous operator-valued frames, we derive an entropic uncertainty principle for arbitrary family of operators indexed by   measure spaces having finite measure. We give an application to the special case of compact groups.

\textbf{Keywords}:   Uncertainty Principle, Quantum measurement, Shannon Entropy,  Parseval Frame, Operator-valued frame.

\textbf{Mathematics Subject Classification (2020)}: 81P15, 94A17, 42C15.\\

\hrule

%\tableofcontents
\hrule
\section{Introduction}
Let $\mathcal{H}$ be a finite dimensional Hilbert space. Given an orthonormal basis  $\{\tau_j\}_{j=1}^n$ for $\mathcal{H}$, the \textbf{(finite) Shannon entropy}  at a point $h \in \mathcal{H}_\tau$ is defined as 
\begin{align}\label{D}
	S_\tau (h)\coloneqq - \sum_{j=1}^{n} \left|\left \langle h, \tau_j\right\rangle \right|^2\log \left|\left \langle h, \tau_j\right\rangle \right|^2,
\end{align}
where $\mathcal{H}_\tau\coloneqq \{h \in \mathcal{H}: \|h\|=1,  \langle h , \tau_j \rangle \neq 0, 1\leq j \leq n\}$ \cite{DEUTSCH}. In 1983, Deutsch derived following breakthrough entropic uncertainty principle for Shannon entropy  \cite{DEUTSCH}.
\begin{theorem}\cite{DEUTSCH} (\textbf{Deutsch Entropic Uncertainty Principle})  \label{DU}
Let $\{\tau_j\}_{j=1}^n$,  $\{\omega_j\}_{j=1}^n$ be two orthonormal bases for a  finite dimensional Hilbert space $\mathcal{H}$. Then 
	\begin{align*}
2 \log n \geq S_\tau (h)+S_\omega (h)\geq -2 \log \left(\frac{1+\displaystyle \max_{1\leq j, k \leq n}|\langle\tau_j , \omega_k\rangle|}{2}\right), \quad \forall h \in \mathcal{H}_\tau\cap \mathcal{H}_\omega.
	\end{align*}
\end{theorem}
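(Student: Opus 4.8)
The plan is to prove the two inequalities separately: the left-hand bound is a routine maximum-entropy estimate, while the right-hand bound carries the real content. For the upper estimate, observe that Parseval's identity gives $\sum_{j=1}^{n}|\langle h,\tau_j\rangle|^2=\|h\|^2=1$, so $\big(|\langle h,\tau_j\rangle|^2\big)_{j=1}^{n}$ is a probability vector on $n$ points; concavity of $t\mapsto-t\log t$ together with Jensen's inequality yields $S_\tau(h)\le\log n$, and likewise $S_\omega(h)\le\log n$, whence $S_\tau(h)+S_\omega(h)\le 2\log n$.

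For the lower estimate I would set $P\coloneqq\max_{1\le j\le n}|\langle h,\tau_j\rangle|^2$ and $Q\coloneqq\max_{1\le k\le n}|\langle h,\omega_k\rangle|^2$, both lying in $(0,1]$ because $h\in\mathcal{H}_\tau\cap\mathcal{H}_\omega$. Since $0<|\langle h,\tau_j\rangle|^2\le P$ for every $j$, we have $\log|\langle h,\tau_j\rangle|^2\le\log P$, and therefore, using Parseval once more,
\[
S_\tau(h)=-\sum_{j=1}^{n}|\langle h,\tau_j\rangle|^2\log|\langle h,\tau_j\rangle|^2\ge-\log P\sum_{j=1}^{n}|\langle h,\tau_j\rangle|^2=-\log P,
\]
and similarly $S_\omega(h)\ge-\log Q$. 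Hence $S_\tau(h)+S_\omega(h)\ge-\log(PQ)=-2\log\sqrt{PQ}$, and since $-\log$ is decreasing the whole problem reduces to the single pointwise inequality $\sqrt{PQ}\le\tfrac12\big(1+\max_{1\le j,k\le n}|\langle\tau_j,\omega_k\rangle|\big)$.

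The heart of the matter is then the following lemma: for all unit vectors $h,u,v\in\mathcal{H}$,
\[
|\langle h,u\rangle|\,|\langle h,v\rangle|\le\tfrac12\big(1+|\langle u,v\rangle|\big).
\]
Applying it with $u=\tau_{j_0}$ and $v=\omega_{k_0}$ attaining the maxima defining $P$ and $Q$, and then bounding $|\langle\tau_{j_0},\omega_{k_0}\rangle|$ above by $\max_{1\le j,k\le n}|\langle\tau_j,\omega_k\rangle|$, gives precisely the inequality required in the previous step, and the theorem follows.

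To prove the lemma I would first reduce to the two-dimensional subspace $W=\operatorname{span}\{u,v\}$: replacing $h$ by the normalized orthogonal projection of $h$ onto $W$ leaves $\langle h,u\rangle$ and $\langle h,v\rangle$ unchanged (the discarded component is orthogonal to both $u$ and $v$) and does not decrease their moduli (the normalization divides by $\|\operatorname{proj}_W h\|\le1$), so we may assume $h\in W$, the case $v\in\mathbb{C}u$ being immediate. Next, multiply $v$ by a unimodular scalar so that $\langle u,v\rangle=\cos\phi\ge0$, pick an orthonormal basis of $W$ with $u$ as first vector, and write the moduli of the two coordinates of $h$ as $\cos\psi$ and $\sin\psi$; the triangle inequality followed by the product-to-sum identity then gives
\[
|\langle h,u\rangle|\,|\langle h,v\rangle|\le\cos\psi\,(\cos\psi\cos\phi+\sin\psi\sin\phi)=\tfrac12\big(\cos\phi+\cos(2\psi-\phi)\big)\le\tfrac12\big(1+\cos\phi\big).
\]
The main obstacle is exactly this lemma: once one notices that it collapses to a one-line trigonometric identity after the planar reduction — and takes care of the complex phases in that reduction — the rest is bookkeeping with Parseval's identity and monotonicity of the logarithm.
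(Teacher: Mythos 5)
Your argument is correct, and it is worth noting that the paper itself offers no proof of this statement: Theorem \ref{DU} is only quoted from Deutsch's 1983 paper as background, so there is no internal proof to compare against. Your route is essentially Deutsch's original two-dimensional reduction, packaged more cleanly: the upper bound is the standard Jensen/concavity estimate for a probability vector coming from Parseval, the lower bound uses the min-entropy estimate $S_\tau(h)\ge -\log P$, $S_\omega(h)\ge -\log Q$, and everything is then carried by the lemma $|\langle h,u\rangle|\,|\langle h,v\rangle|\le \tfrac12\bigl(1+|\langle u,v\rangle|\bigr)$ for unit vectors, which your planar reduction and the product-to-sum identity establish correctly (the triangle inequality indeed absorbs the complex phases; you should just say a word about the degenerate case $\operatorname{proj}_W h=0$, where both inner products vanish and the inequality is trivial). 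This is quite different in spirit from the machinery the paper actually develops: for its main result (Theorem \ref{UPG}), and already for the Maassen--Uffink and Krishna--Parthasarathy strengthenings, the proof runs through Riesz--Thorin interpolation applied to the cross operator built from the two frames, which is what upgrades the Deutsch constant $\tfrac12(1+c)$ to $c$. Your elementary approach buys a short, self-contained proof of the weaker bound, but it genuinely cannot reach the interpolation-based constant, since the step $S\ge-\log(\max_j p_j)$ retains only the largest coefficient; that loss is exactly why the stronger results need the $\mathcal{L}^1$--$\mathcal{L}^\infty$ versus $\mathcal{L}^2$--$\mathcal{L}^2$ interpolation argument.
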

In 1988, Maassen  and Uffink (motivated from the conjecture  by Kraus made in 1987 \cite{KRAUS}) improved Deutsch entropic uncertainty principle.
\begin{theorem}\cite{MAASSENUFFINK} 
(\textbf{Maassen-Uffink Entropic Uncertainty Principle})  \label{MU}
Let $\{\tau_j\}_{j=1}^n$,  $\{\omega_j\}_{j=1}^n$ be two orthonormal bases for a  finite dimensional Hilbert space $\mathcal{H}$. Then 
\begin{align*}
	2 \log n \geq S_\tau (h)+S_\omega (h)\geq -2 \log \left(\displaystyle \max_{1\leq j, k \leq n}|\langle\tau_j , \omega_k\rangle|\right), \quad \forall h \in \mathcal{H}_\tau \cap \mathcal{H}_\omega.
\end{align*}	
\end{theorem}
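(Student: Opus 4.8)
The plan is to establish the two estimates separately; the left-hand inequality is elementary and the right-hand one carries the real content. For the upper bound, observe that since $\{\tau_j\}_{j=1}^n$ is an orthonormal basis and $\|h\|=1$, the numbers $p_j\coloneqq|\langle h,\tau_j\rangle|^2$ form a probability vector on $n$ points, so $S_\tau(h)=-\sum_{j=1}^n p_j\log p_j$ is the Shannon entropy of such a vector and is therefore at most $\log n$ (by concavity of $t\mapsto-t\log t$, i.e.\ Jensen's inequality). The same bound holds for $S_\omega(h)$, and adding the two gives $S_\tau(h)+S_\omega(h)\le 2\log n$.

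For the lower bound I would carry out the interpolation argument of Maassen and Uffink. Write $a_j\coloneqq\langle h,\tau_j\rangle$ and $b_k\coloneqq\langle h,\omega_k\rangle$, and let $U$ be the unitary matrix implementing the change of basis, so that $b=Ua$; its entries equal $\langle\tau_j,\omega_k\rangle$ up to conjugation, whence $\max_{j,k}|U_{kj}|=c$, where $c\coloneqq\max_{1\le j,k\le n}|\langle\tau_j,\omega_k\rangle|$. As an operator $(\mathbb{C}^n,\|\cdot\|_2)\to(\mathbb{C}^n,\|\cdot\|_2)$ the map $a\mapsto Ua$ has norm $1$ (unitarity), and as an operator $(\mathbb{C}^n,\|\cdot\|_1)\to(\mathbb{C}^n,\|\cdot\|_\infty)$ it has norm $c$ (the bound $\|Ua\|_\infty\le c\,\|a\|_1$ is immediate, and equality holds on a coordinate vector that realizes $c$). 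By the Riesz--Thorin interpolation theorem, for every $p\in[1,2]$ with conjugate exponent $q=p/(p-1)$,
\begin{align}\label{eq:RTMU}
\Bigl(\sum_{k=1}^n|b_k|^q\Bigr)^{1/q}=\|Ua\|_q\ \le\ c^{\,(2-p)/p}\Bigl(\sum_{j=1}^n|a_j|^p\Bigr)^{1/p}.
\end{align}

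It remains to extract the entropy inequality from \eqref{eq:RTMU}. With $p_j=|a_j|^2$ and $r_k=|b_k|^2$ (probability vectors, since $\|h\|=1$), taking logarithms in \eqref{eq:RTMU} shows that
\begin{align*}
F(p)\ \coloneqq\ \frac{2-p}{p}\log c+\frac1p\log\sum_{j=1}^n p_j^{\,p/2}-\frac1q\log\sum_{k=1}^n r_k^{\,q/2},\qquad q=\frac{p}{p-1},
\end{align*}
satisfies $F(p)\ge 0$ for all $p\in[1,2]$, while $F(2)=0$ because $\sum_j p_j=\sum_k r_k=1$. Here is where the hypothesis $h\in\mathcal{H}_\tau\cap\mathcal{H}_\omega$ enters: all $p_j$ and $r_k$ are strictly positive, so $F$ is smooth near $p=2$; since $F$ attains its minimum value $0$ at the endpoint $p=2$, its derivative from the left there is $\le 0$. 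Computing that derivative — using $\tfrac{d}{dp}\sum_j p_j^{\,p/2}\big|_{p=2}=\tfrac12\sum_j p_j\log p_j$, the analogous identity for the $r_k$ together with the chain rule for $q=q(p)$ (note $q(2)=2$ and $q'(2)=-1$), and the vanishing of $\log\sum_j p_j^{\,p/2}$ and $\log\sum_k r_k^{\,q/2}$ at $p=2$ — one finds
\begin{align*}
F'(2)\ =\ -\tfrac12\log c-\tfrac14\bigl(S_\tau(h)+S_\omega(h)\bigr)\ \le\ 0,
\end{align*}
which rearranges to $S_\tau(h)+S_\omega(h)\ge -2\log c=-2\log\bigl(\max_{1\le j,k\le n}|\langle\tau_j,\omega_k\rangle|\bigr)$.

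The step I expect to be the main obstacle is precisely this passage to the endpoint derivative: one has to confirm that $F$ is genuinely differentiable at $p=2$ (the only possible obstruction, a vanishing inner product, being ruled out by the hypothesis) and that the one-sided derivative of a nonnegative function vanishing at the endpoint is nonpositive. A cleaner route that avoids differentiating is to recast \eqref{eq:RTMU} via the substitution $\alpha=p/2$, $\beta=q/2$ (so that $\alpha^{-1}+\beta^{-1}=2$) into the R\'enyi-entropy uncertainty relation $H_\alpha(\{p_j\})+H_\beta(\{r_k\})\ge -2\log c$, valid for all $\alpha\in(\tfrac12,1)$, and then to let $\alpha\to 1^-$, using that the R\'enyi entropy of order $\gamma$ of a strictly positive finite distribution converges to its Shannon entropy as $\gamma\to1$; this recovers the bound. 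The $2\log n$ half of the theorem requires none of this.
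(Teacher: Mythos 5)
Your proposal is correct, and it is essentially the same argument the paper itself uses: the paper cites Maassen--Uffink without proof, but its proof of the general Theorem \ref{UPG} (which contains this statement as the special case $A_j h=\langle h,\tau_j\rangle\tau_j$, $B_k h=\langle h,\omega_k\rangle\omega_k$ with counting measure) runs on exactly your two ingredients, namely Riesz--Thorin interpolation between the $(2,2)$ and $(1,\infty)$ endpoints followed by a limiting/derivative argument at the endpoint exponent to extract the entropies. Your endpoint-derivative computation of $F'(2)$ (or equivalently the R\'enyi-entropy limit $\alpha\to1$) plays the same role as the paper's L'H\^opital limit as $r\to0$, and the $2\log n$ upper bound via Jensen is standard and fine.
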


In 2002, motivated from the theory of quantum computation and quantum information, Krishna and Parthasarathy improved Maassen-Uffink entropic uncertainty principle \cite{KRISHNAPARTHASARATHY}. To do so, first they introduced the notion of entropy for quantum measurements.  Let  $\{P_j\}_{j=1}^n$ be a collection of  orthogonal projections on a finite dimensional Hilbert space $\mathcal{H}$ satisfying 
\begin{align*}
	\sum_{j=1}^{n}P_j=I_\mathcal{H},
\end{align*}
the identity operator on $\mathcal{H}$. The Shannon entropy of $\{P_j\}_{j=1}^n$ at a point  $h \in \mathcal{H}_P$  is defined as 
\begin{align}\label{KP}
	S_P(h)\coloneqq -\sum_{j=1}^{n}  \langle P_jh, h\rangle \log \langle P_jh, h\rangle , 
\end{align}
where $\mathcal{H}_P\coloneqq \{h \in \mathcal{H}: \|h\|=1,  P_jh \neq 0, 1\leq j \leq n\}$. Note that Equation (\ref{KP}) reduces to Equation (\ref{D}) whenever $P_j$ is the projection onto the span of  $\tau_j$ for each $j$.  Krishna and Parthasarathy made the following improvement of Theorem \ref{MU}.

\begin{theorem}\cite{KRISHNAPARTHASARATHY}   (\textbf{Krishna-Parthasarathy Entropic Uncertainty Principle})  \label{KPU}
Let $\{P_j\}_{j=1}^n$,  $\{Q_k\}_{k=1}^m$ be two collections of orthogonal projections  on  a  finite dimensional Hilbert space $\mathcal{H}$ such that 
\begin{align*}
	\sum_{j=1}^{n}P_j=I_\mathcal{H}=
	\sum_{k=1}^{m}Q_k.
\end{align*}
Then 
\begin{align*}
2 \log n \geq S_P (h)+S_Q (h)&\geq -2 \log \left(\displaystyle \max_{1\leq j \leq n, 1\leq k \leq m}\frac{|\langle P_jQ_kh, h\rangle|}{\|P_j h\|\|Q_kh\|}\right)	\\
&\geq 2 \log \left(\displaystyle \max_{1\leq j \leq n, 1\leq k \leq m}\|P_jQ_k\|\right), \quad \forall h \in \mathcal{H}_P \cap \mathcal{H}_Q.
\end{align*}	
\end{theorem}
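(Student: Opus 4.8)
\section{Strategy of proof}

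The plan is to reduce Theorem~\ref{KPU} to the Riesz--Thorin interpolation theorem, following the classical route to Theorem~\ref{MU}, once the correct ``transition matrix'' attached to the two resolutions of the identity has been identified.

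First I would record the elementary ingredients. Fix a unit vector $h\in\mathcal{H}_P\cap\mathcal{H}_Q$ and put $p_j\coloneqq\langle P_jh,h\rangle=\|P_jh\|^2$ and $q_k\coloneqq\langle Q_kh,h\rangle=\|Q_kh\|^2$. From $\sum_jP_j=I_{\mathcal H}=\sum_kQ_k$ we get $\sum_jp_j=\sum_kq_k=1$, so $(p_j)_j$ and $(q_k)_k$ are probability vectors with strictly positive entries (by the definitions of $\mathcal H_P$, $\mathcal H_Q$), and $S_P(h)$, $S_Q(h)$ are their Shannon entropies; the left inequality then follows from $S_P(h)\le\log n$ and $S_Q(h)\le\log m$, the maximum-entropy bound. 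Moreover, from $\sum_jP_j=I_{\mathcal H}$ and the fact that each $P_j$ is an orthogonal projection one checks that $P_jP_{j'}=0$ for $j\neq j'$ (apply $\sum_j\|P_jg\|^2=\|g\|^2$ to $g=P_{j'}g'$); consequently the unit vectors $u_j\coloneqq P_jh/\|P_jh\|$ form an orthonormal system, and likewise the $v_k\coloneqq Q_kh/\|Q_kh\|$.

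The heart of the argument is the identity $\sqrt{q_k}=\sum_j\langle u_j,v_k\rangle\sqrt{p_j}$ for every $k$: indeed $\sqrt{q_k}=\langle h,v_k\rangle$ (since $\langle h,v_k\rangle=\langle Q_kh,h\rangle/\|Q_kh\|=\|Q_kh\|$), while $h=\sum_jP_jh=\sum_j\sqrt{p_j}\,u_j$, and pairing these gives the claim. Setting $\xi\coloneqq(\sqrt{p_j})_j$, $\eta\coloneqq(\sqrt{q_k})_k$ and $M\coloneqq(\langle u_j,v_k\rangle)_{k,j}$, this says $\eta=M\xi$ with $\|\xi\|_{\ell^2}=\|\eta\|_{\ell^2}=1$. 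Now I need two norm bounds on $M$. Writing $M=VU^{*}$ with $U\colon\mathcal H\to\mathbb C^{n}$, $Ug=(\langle g,u_j\rangle)_j$ and $V\colon\mathcal H\to\mathbb C^{m}$, $Vg=(\langle g,v_k\rangle)_k$, Bessel's inequality for the orthonormal systems $\{u_j\}$, $\{v_k\}$ makes $U,V$ contractions, so $\|M\|_{\ell^2\to\ell^2}\le 1$; and $\|M\|_{\ell^1\to\ell^\infty}=\max_{j,k}|\langle u_j,v_k\rangle|=\max_{j,k}\frac{|\langle P_jQ_kh,h\rangle|}{\|P_jh\|\,\|Q_kh\|}=:c$, using $\langle u_j,v_k\rangle=\langle P_jh,Q_kh\rangle/(\|P_jh\|\|Q_kh\|)$ and $|\langle P_jh,Q_kh\rangle|=|\langle P_jQ_kh,h\rangle|$. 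Riesz--Thorin then yields, for $\beta\ge 1$ and $\alpha\in(\tfrac12,1]$ with $\tfrac1\alpha+\tfrac1\beta=2$, the bound $\big(\sum_kq_k^{\beta}\big)^{1/(2\beta)}=\|\eta\|_{\ell^{2\beta}}\le c^{\,1-1/\beta}\big(\sum_jp_j^{\alpha}\big)^{1/(2\alpha)}$. Taking logarithms and rewriting via the R\'enyi entropies $H_\alpha(p)=\frac{1}{1-\alpha}\log\sum_jp_j^\alpha$ and $H_\beta(q)=\frac{1}{1-\beta}\log\sum_kq_k^\beta$ --- keeping track that $1-\beta\le 0$ while $1-\alpha\ge 0$, which flips the inequality the right way --- gives $H_\alpha(p)+H_\beta(q)\ge -2\log c$; letting $\beta\to 1^{+}$ (hence $\alpha\to 1^{-}$), which is legitimate because all $p_j,q_k>0$ so $H_\alpha(p)\to S_P(h)$ and $H_\beta(q)\to S_Q(h)$, produces the middle inequality $S_P(h)+S_Q(h)\ge -2\log c$. (Equivalently, one can avoid R\'enyi entropies and simply differentiate the displayed inequality at $\beta=1$, where it degenerates to an equality.)

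For the rightmost inequality it suffices to show $\frac{|\langle P_jQ_kh,h\rangle|}{\|P_jh\|\,\|Q_kh\|}\le\|P_jQ_k\|$ for all $j,k$: since $\langle P_jQ_kh,h\rangle=\langle P_jQ_kh,P_jh\rangle$ (as $P_j=P_j^{2}=P_j^{*}$) and $\|P_jQ_kh\|=\|P_jQ_k(Q_kh)\|\le\|P_jQ_k\|\,\|Q_kh\|$, Cauchy--Schwarz gives $|\langle P_jQ_kh,h\rangle|\le\|P_jQ_k\|\,\|P_jh\|\,\|Q_kh\|$. Hence $0<c\le\max_{j,k}\|P_jQ_k\|\le 1$, so $-2\log c\ge -2\log\big(\max_{j,k}\|P_jQ_k\|\big)\ge 2\log\big(\max_{j,k}\|P_jQ_k\|\big)$, the last step because $\log$ is non-positive on $(0,1]$. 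I expect the real obstacle to be the middle portion: isolating the operator $M$ and recognizing that its $\ell^2\to\ell^2$ norm is $\le 1$ precisely because mutual orthogonality within each projection family turns $\{u_j\}$ and $\{v_k\}$ into orthonormal systems --- this is exactly where the projection-valued hypothesis enters and what makes Theorem~\ref{KPU} strictly sharper than Theorem~\ref{MU}. Once $M$ and its two endpoint bounds are in hand, the interpolation step and the sign bookkeeping in the passage to entropies are routine, though they must be handled carefully.
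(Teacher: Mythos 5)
Your argument is correct, and it runs on the same analytic engine that drives this paper -- Riesz--Thorin interpolation (Theorem \ref{RT}) between the $(2,2)$ and $(1,\infty)$ endpoints, followed by a limit that recovers the Shannon entropy -- but note that Theorem \ref{KPU} is only cited here; the proof the paper actually contains is of the continuous statement, Theorem \ref{UPG}. The instructive difference is in \emph{what} gets interpolated. The paper interpolates the $h$-independent operator $(Tf)(\beta)=\int_\Omega B_\beta A_\alpha^*(f(\alpha))\,d\mu(\alpha)$, whose $L^1\to L^\infty$ bound is $\sup_{\alpha,\beta}\|B_\beta A_\alpha^*\|$; specialized to projections this can only produce the weaker constant $\max_{j,k}\|P_jQ_k\|$, i.e.\ the rightmost bound in Theorem \ref{KPU} (the paper itself remarks that for projections Theorem \ref{KPU} gives a stronger lower bound than Theorem \ref{UPG}). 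You instead interpolate the $h$-dependent scalar matrix $M_{k,j}=\langle u_j,v_k\rangle$ built from the normalized projected vectors $u_j=P_jh/\|P_jh\|$, $v_k=Q_kh/\|Q_kh\|$: mutual orthogonality of projections summing to the identity makes $\{u_j\}$ and $\{v_k\}$ orthonormal systems, forcing $\|M\|_{\ell^2\to\ell^2}\le 1$, while $\|M\|_{\ell^1\to\ell^\infty}$ is exactly the sharp $h$-dependent constant $c=\max_{j,k}|\langle P_jQ_kh,h\rangle|/(\|P_jh\|\|Q_kh\|)$. That is precisely the extra idea needed for the middle (stronger) inequality, and your supporting steps -- the identity $\sqrt{q_k}=\sum_j\langle u_j,v_k\rangle\sqrt{p_j}$, the factorization $M=VU^*$ with $U,V$ contractions by Bessel, the R\'enyi-entropy limit $\beta\to 1^{+}$ (or the paper's equivalent device of L'Hopital at $r\to 0$), and the Cauchy--Schwarz bound $c\le\max_{j,k}\|P_jQ_k\|\le 1$ yielding $-2\log c\ge 2\log(\max_{j,k}\|P_jQ_k\|)$ -- are all sound. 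One small mismatch: your maximum-entropy step gives $S_P(h)+S_Q(h)\le\log n+\log m=\log(nm)$, which matches the stated upper bound $2\log n$ only when $m\le n$; this is an imprecision in the statement as reproduced (compare the $\log(\mu(\Omega)\nu(\Delta))$ bound of Theorem \ref{UPG}) rather than a defect of your argument.
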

Krishna and Parthasarathy were able to give a beautiful application of Theorem \ref{KPU} to finite groups using Peter-Weyl theorem \cite{KRISHNAPARTHASARATHY}. They also extended Theorem \ref{KPU} to positive operators. We are motivated by the following two questions.
\begin{enumerate}
	\item What is the version of  Theorem \ref{KPU} for collection of operators which are not necessarily positive?
	\item What is the version of Theorem  \ref{KPU} indexed by measure spaces?
\end{enumerate}
 Using the theory of frames, we are going to do both of  these in the paper. We also give an application of our theorem to compact groups using Peter-Weyl representation theory.

\section{Continuous   Krishna-Parthasarathy  Entropic Uncertainty Principle}
In the paper,   $\mathbb{K}$ denotes $\mathbb{C}$ or $\mathbb{R}$ and $\mathcal{H}$  denotes a Hilbert space  (need not be finite dimensional) over $\mathbb{K}$. Given two Hilbert spaces $\mathcal{H}$ and  $\mathcal{H}_0$, the set of all continuous linear operators from $\mathcal{H}$ to  $\mathcal{H}_0$ is denoted by $\mathcal{B}(\mathcal{H}, \mathcal{H}_0)$. Given a measure space $(\Omega, \mu)$ and a Hilbert space  $\mathcal{H}$,  recall that 
\begin{align*}
	\mathcal{L}^2(\Omega, \mathcal{H})\coloneqq \left\{f:\Omega \to  \mathcal{H} \text{  is weakly measurable}, \int\limits_{\Omega}\|f(\alpha)\|^2\,d\mu(\alpha)<\infty\right\}.
\end{align*}
 Unlike the discrete case, we need the notion of frames to handle continuous case. 
\begin{definition}\cite{ABDOLLAHPOURFAROUGHI, SUN, KAFTALLARSONZHANG}
	Let $(\Omega, \mu)$ be a measure space and let $\mathcal{H}$, $\mathcal{H}_0$ be Hilbert spaces. A family    $\{A_\alpha\}_{\alpha\in \Omega}$ of continuous linear operators in $\mathcal{B}(\mathcal{H}, \mathcal{H}_0)$  is said to be a \textbf{continuous operator-valued Parseval frame}  in $\mathcal{B}(\mathcal{H}, \mathcal{H}_0)$  if the following conditions hold.
	\begin{enumerate}[\upshape(i)]
		\item For each $h \in \mathcal{H}$, the map $\Omega \ni \alpha \mapsto A_\alpha h  \in \mathcal{H}_0$ is weakly measurable.
		\item 
		\begin{align*}
			\|h\|^2=\int\limits_{\Omega}\|A_\alpha h\|^2\,d\mu(\alpha), \quad \forall h \in \mathcal{H}.
		\end{align*}
	\end{enumerate}
\end{definition}
To proceed we need to generalize Definition \ref{KP}.
\begin{definition}
Let $(\Omega, \mu)$ be a measure space and let $\mathcal{H}$, $\mathcal{H}_0$ be Hilbert spaces. Given a continuous  operator-valued Parseval frame     $\{A_\alpha\}_{\alpha\in \Omega}$  in $\mathcal{B}(\mathcal{H}, \mathcal{H}_0)$, we define the continuous Shannon entropy  	of $\{A_\alpha\}_{\alpha\in \Omega}$ at a point  $h \in \mathcal{H}_A$   as 
\begin{align}\label{OKP}
	S_A(h)\coloneqq -\int\limits_{\Omega}\|A_\alpha h\|^2\log \|A_\alpha h \|^2\, d\mu(\alpha), 
\end{align}
where $\mathcal{H}_A\coloneqq \{h \in \mathcal{H}: \|h\|=1,  A_\alpha h \neq 0, \forall \alpha \in \Omega\}$. 
\end{definition}
Before deriving main theorem, we recall Riesz-Thorin interpolation theorem which we are going to use. 
\begin{theorem}\cite{STEINSHAKARCHI} (\textbf{Riesz-Thorin Interpolation Theorem}) \label{RT}
Let $(\Omega, \mu)$	and $(\Delta, \nu)$ be measure spaces and let $\mathcal{X}$ be a Banach space. Let $1\leq p_0, q_0, p_1. q_1\leq \infty$. Suppose 
\begin{align*}
	T:	\mathcal{L}^{p_0}(\Omega, \mathcal{X})+	\mathcal{L}^{p_1}(\Omega, \mathcal{X})\to \mathcal{L}^{q_0}(\Delta, \mathcal{X})+	\mathcal{L}^{q_1}(\Delta, \mathcal{X})
\end{align*}
is a linear operator such that both 
\begin{align*}
&T:	\mathcal{L}^{p_0}(\Omega, \mathcal{X})\to \mathcal{L}^{q_0}(\Delta, \mathcal{X})\\
&T:	\mathcal{L}^{p_1}(\Omega, \mathcal{X})\to \mathcal{L}^{q_1}(\Delta, \mathcal{X})
\end{align*}
are bounded linear operators. For $0<r<1$, define $p_r$ and $q_r$ as 
\begin{align*}
	\frac{1}{p_r}\coloneqq \frac{1-r}{p_0}+\frac{r}{p_1},  \quad \frac{1}{q_r}\coloneqq \frac{1-r}{q_0}+\frac{r}{q_1}.
\end{align*}
Then 
\begin{align*}
	T:	\mathcal{L}^{p_r}(\Omega, \mathcal{X})\to \mathcal{L}^{q_r}(\Delta, \mathcal{X})
\end{align*}
is a bounded linear operator  and 
\begin{align*}
	\|T\|_{	\mathcal{L}^{p_r}(\Omega, \mathcal{X})\to \mathcal{L}^{q_r}(\Delta, \mathcal{X})}\leq 	\|T\|_{	\mathcal{L}^{p_0}(\Omega, \mathcal{X})\to \mathcal{L}^{q_0}(\Delta, \mathcal{X})}^{1-r}
	\|T\|_{	\mathcal{L}^{p_1}(\Omega, \mathcal{X})\to \mathcal{L}^{q_1}(\Delta, \mathcal{X})}^r.
\end{align*}
\end{theorem}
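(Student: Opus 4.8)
The plan is to run the classical complex-interpolation argument (Thorin's method of analytic families), carried out in the vector-valued (Bochner) setting; the only structural novelty over the scalar case is that the ``coefficients'' of a simple function now lie in $\mathcal{X}$ rather than in $\mathbb{K}$, so the analytic family must be built by rescaling the \emph{norms} of those coefficients. Write $M_0\coloneqq\|T\|_{\mathcal{L}^{p_0}(\Omega,\mathcal{X})\to\mathcal{L}^{q_0}(\Delta,\mathcal{X})}$ and $M_1\coloneqq\|T\|_{\mathcal{L}^{p_1}(\Omega,\mathcal{X})\to\mathcal{L}^{q_1}(\Delta,\mathcal{X})}$. It suffices to prove $\|Tf\|_{\mathcal{L}^{q_r}(\Delta,\mathcal{X})}\le M_0^{1-r}M_1^{r}$ for every $\mathcal{X}$-valued simple function $f=\sum_{j=1}^{N}x_j\mathbf{1}_{E_j}$ with the $E_j\subseteq\Omega$ pairwise disjoint of finite measure and $\|f\|_{\mathcal{L}^{p_r}(\Omega,\mathcal{X})}=1$, since such $f$ form a dense subset of $\mathcal{L}^{p_r}(\Omega,\mathcal{X})$ lying in $\mathcal{L}^{p_0}(\Omega,\mathcal{X})\cap\mathcal{L}^{p_1}(\Omega,\mathcal{X})$ (so $Tf$ then lies in $\mathcal{L}^{q_0}(\Delta,\mathcal{X})\cap\mathcal{L}^{q_1}(\Delta,\mathcal{X})$), and the general case follows by a standard Fatou-type limiting argument. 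Since $\mathcal{L}^{q_r'}(\Delta,\mathcal{X}^{*})$ norms $\mathcal{L}^{q_r}(\Delta,\mathcal{X})$ under the pairing $\langle u,v\rangle\coloneqq\int_{\Delta}v(\beta)\bigl(u(\beta)\bigr)\,d\nu(\beta)$, it is in turn enough to bound $|\langle Tf,g\rangle|$ by $M_0^{1-r}M_1^{r}$ for every $\mathcal{X}^{*}$-valued simple function $g=\sum_{k=1}^{M}\phi_k\mathbf{1}_{F_k}$ with $\|g\|_{\mathcal{L}^{q_r'}(\Delta,\mathcal{X}^{*})}=1$.

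Next I would introduce the analytic family. Put $\theta(z)\coloneqq\frac{1-z}{p_0}+\frac{z}{p_1}$ and $\psi(z)\coloneqq\frac{1-z}{q_0'}+\frac{z}{q_1'}$, so that $\theta(r)=1/p_r$ and $\psi(r)=1/q_r'$, and define (omitting any term with a zero coefficient)
\[
f_z\coloneqq\sum_{j=1}^{N}\|x_j\|^{\,p_r\theta(z)-1}x_j\mathbf{1}_{E_j},\qquad g_z\coloneqq\sum_{k=1}^{M}\|\phi_k\|^{\,q_r'\psi(z)-1}\phi_k\mathbf{1}_{F_k}.
\]
With $u_j\coloneqq\|x_j\|^{-1}x_j\mathbf{1}_{E_j}$ and $w_k\coloneqq\|\phi_k\|^{-1}\phi_k\mathbf{1}_{F_k}$ fixed, $f_z=\sum_j\|x_j\|^{p_r\theta(z)}u_j$ is an entire $\bigl(\mathcal{L}^{p_0}\cap\mathcal{L}^{p_1}\bigr)$-valued function of $z$, and by linearity and boundedness of $T$, $Tf_z=\sum_j\|x_j\|^{p_r\theta(z)}Tu_j$ is an entire $\bigl(\mathcal{L}^{q_0}\cap\mathcal{L}^{q_1}\bigr)$-valued function. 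Hence
\[
F(z)\coloneqq\langle Tf_z,g_z\rangle=\sum_{j,k}\|x_j\|^{p_r\theta(z)}\|\phi_k\|^{q_r'\psi(z)}\int_{\Delta}w_k(\beta)\bigl(Tu_j(\beta)\bigr)\,d\nu(\beta)
\]
is a finite linear combination of entire functions, hence entire, and it is bounded on the closed strip $\overline{S}=\{z:0\le\operatorname{Re}z\le1\}$ because $\bigl|\,\|x_j\|^{p_r\theta(z)}\bigr|=\|x_j\|^{p_r\operatorname{Re}\theta(z)}$ depends only on $\operatorname{Re}z\in[0,1]$.

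Then come the two boundary estimates. On $\operatorname{Re}z=0$ one has $\operatorname{Re}\theta(iy)=1/p_0$ and $\operatorname{Re}\psi(iy)=1/q_0'$, giving $\|f_{iy}\|_{\mathcal{L}^{p_0}}=\|f\|_{\mathcal{L}^{p_r}}^{p_r/p_0}=1$ and $\|g_{iy}\|_{\mathcal{L}^{q_0'}}=1$, whence $|F(iy)|\le\|Tf_{iy}\|_{\mathcal{L}^{q_0}}\|g_{iy}\|_{\mathcal{L}^{q_0'}}\le M_0$ by Hölder's inequality and the hypothesis on $T$; symmetrically $|F(1+iy)|\le M_1$. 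As $F$ is analytic and bounded on $\overline{S}$, the Hadamard three-lines theorem gives $|F(r)|\le M_0^{1-r}M_1^{r}$. Finally $p_r\theta(r)-1=0$ and $q_r'\psi(r)-1=0$, so $f_r=f$, $g_r=g$, and $F(r)=\langle Tf,g\rangle$; taking the supremum over $g$ and then over $f$ yields $\|T\|_{\mathcal{L}^{p_r}(\Omega,\mathcal{X})\to\mathcal{L}^{q_r}(\Delta,\mathcal{X})}\le M_0^{1-r}M_1^{r}$.

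The complex-analytic core is verbatim the scalar argument, so the main work is the Banach-space bookkeeping: one must (i) know that $\mathcal{X}$-valued simple functions are dense in $\mathcal{L}^{p}(\Omega,\mathcal{X})$ for $1\le p<\infty$ (the defining property of Bochner spaces); (ii) verify that $\mathcal{L}^{q_r'}(\Delta,\mathcal{X}^{*})$ really norms $\mathcal{L}^{q_r}(\Delta,\mathcal{X})$ for $1\le q_r<\infty$, which holds with no reflexivity of $\mathcal{X}$ by choosing on each $F_k$ a Hahn--Banach norming functional for the value taken there; and (iii) treat the endpoint exponents $\infty$ among $p_0,p_1,q_0,q_1$, where the formulas for $\theta,\psi$ and for the duality degenerate and the customary modifications (interpreting $1/\infty=0$, exhausting by $\sigma$-finite sets, replacing $\mathcal{L}^{\infty}$ by a suitable dense subspace) are required. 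Subject to these standard technicalities the proof is complete; I expect step~(ii), the duality/norming reduction for a general Banach space, to be the one demanding the most care.
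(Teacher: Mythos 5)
This statement is quoted background: the paper gives no proof of the Riesz--Thorin theorem, it simply cites the reference and then applies it, so there is no internal argument to compare yours against. Your reconstruction is the standard Thorin complex-interpolation proof transported to Bochner spaces, and it is essentially correct: the normalizations check out ($f_r=f$, $g_r=g$, $\|f_{iy}\|_{p_0}=\|g_{iy}\|_{q_0'}=1$), the function $F$ is a finite sum of terms $a^{c+dz}$ with $a>0$ times fixed constants, hence entire and bounded on the strip, and the three-lines theorem delivers $M_0^{1-r}M_1^{r}$; the three technicalities you flag (density of simple functions, the fact that $\mathcal{L}^{q_r'}(\Delta,\mathcal{X}^*)$ is a norming subspace via Hahn--Banach functionals chosen on each $F_k$ --- no Radon--Nikodym or reflexivity hypothesis needed --- and the endpoint degeneracies) are indeed the standard ones and are handled exactly as you indicate. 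Three small points you should still make explicit: (a) the analytic-family argument multiplies the vectors $x_j$ by complex scalars, so it requires $\mathcal{X}$ to be a complex Banach space; since the paper allows $\mathbb{K}=\mathbb{R}$, the real case needs a complexification step (harmless in the paper's application, where $\mathcal{X}=\mathcal{H}_0$ is a Hilbert space and complexification preserves operator norms); (b) the three-lines bound as you state it presumes $M_0,M_1>0$, so either dispose of the degenerate case $M_0M_1=0$ separately or run the argument with $M_i+\varepsilon$; (c) after extending by density from simple functions you must check that the extension agrees with the given $T$ on all of $\mathcal{L}^{p_r}\subseteq\mathcal{L}^{p_0}+\mathcal{L}^{p_1}$ (split $f$ at height $1$ and use convergence of the truncated approximants in $\mathcal{L}^{p_0}$ and $\mathcal{L}^{p_1}$ separately); this is the content of your ``Fatou-type'' step and deserves a sentence rather than a name.
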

Following is the main theorem of this article which we would like to call continuous Krishna-Parthasarathy entropic uncertainty principle. 
\begin{theorem}\label{UPG}
Let $(\Omega, \mu)$	and $(\Delta, \nu)$ be finite measure spaces and let 	$\mathcal{H}$, $\mathcal{H}_0$ be Hilbert spaces.  Let $\{A_\alpha\}_{\alpha\in \Omega}$ and  $\{B_\beta\}_{\beta \in \Delta}$ be  continuous  operator-valued Parseval frames  in $\mathcal{B}(\mathcal{H}, \mathcal{H}_0)$. Then 
\begin{align*}
	 \log ((\mu(\Omega)\nu(\Delta)) \geq S_A (h)+S_B (h)\geq -2 \log \left(\displaystyle \sup_{\alpha \in \Omega, \beta \in \Delta}\|B_\beta A_\alpha^*\|\right), \quad \forall h \in \mathcal{H}_A \cap \mathcal{H}_B.	
\end{align*}
\end{theorem}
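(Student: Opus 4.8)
The plan is to prove the two inequalities separately. The upper bound is a maximum-entropy (Jensen) estimate, while the lower bound is the substantive part and is obtained by transplanting the classical Maassen--Uffink / Krishna--Parthasarathy interpolation argument to the continuous-frame setting via Theorem \ref{RT} with $\mathcal X=\mathcal H_0$. For the upper bound, fix $h\in\mathcal H_A\cap\mathcal H_B$. By the Parseval identity, $p(\alpha)\coloneqq\|A_\alpha h\|^2$ is a probability density on $(\Omega,\mu)$ that is strictly positive almost everywhere (since $h\in\mathcal H_A$), so applying Jensen's inequality to the concave function $\log$ against the probability measure $p\,d\mu$, with integrand $1/p$, yields $S_A(h)=\int_\Omega p\log(1/p)\,d\mu\le\log\int_\Omega p\cdot\tfrac1p\,d\mu=\log\mu(\Omega)$, and likewise $S_B(h)\le\log\nu(\Delta)$; adding these gives the upper bound, and in particular shows $S_A(h)$ and $S_B(h)$ are finite, which I use below.

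For the lower bound, introduce the analysis operators $\Theta_A\colon\mathcal H\to\mathcal L^2(\Omega,\mathcal H_0)$, $(\Theta_A h)(\alpha)=A_\alpha h$, and $\Theta_B\colon\mathcal H\to\mathcal L^2(\Delta,\mathcal H_0)$, $(\Theta_B h)(\beta)=B_\beta h$. The Parseval property says precisely that $\Theta_A$ and $\Theta_B$ are isometries, so $\Theta_A^*\Theta_A=I_{\mathcal H}$ and $\|\Theta_A\|=\|\Theta_A^*\|=\|\Theta_B\|=1$, and moreover $\Theta_A^* f=\int_\Omega A_\alpha^* f(\alpha)\,d\mu(\alpha)$. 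Consider the transfer operator $T\coloneqq\Theta_B\Theta_A^*$, which acts by $(Tf)(\beta)=\int_\Omega B_\beta A_\alpha^* f(\alpha)\,d\mu(\alpha)$. Put $c\coloneqq\sup_{\alpha\in\Omega,\beta\in\Delta}\|B_\beta A_\alpha^*\|$ (we may assume $c<\infty$, otherwise the bound is vacuous). The last formula shows $\|Tf\|_{\mathcal L^\infty(\Delta,\mathcal H_0)}\le c\,\|f\|_{\mathcal L^1(\Omega,\mathcal H_0)}$, while the isometry estimates give $\|Tf\|_{\mathcal L^2(\Delta,\mathcal H_0)}\le\|f\|_{\mathcal L^2(\Omega,\mathcal H_0)}$. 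Since $\mu(\Omega)<\infty$ we have $\mathcal L^2(\Omega,\mathcal H_0)\subseteq\mathcal L^1(\Omega,\mathcal H_0)$, so $T$ is defined on $\mathcal L^1+\mathcal L^2=\mathcal L^1$ and is simultaneously bounded $\mathcal L^2\to\mathcal L^2$ with norm $\le 1$ and $\mathcal L^1\to\mathcal L^\infty$ with norm $\le c$.

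Now apply Theorem \ref{RT} with $(p_0,q_0)=(2,2)$ and $(p_1,q_1)=(1,\infty)$: for $r\in(0,1)$ one gets $p_r=\tfrac{2}{1+r}$, $q_r=\tfrac{2}{1-r}$ and $\|T\|_{\mathcal L^{p_r}\to\mathcal L^{q_r}}\le 1^{\,1-r}c^{\,r}=c^{\,r}$. Taking $f=\Theta_A h$ (which lies in $\mathcal L^2\subseteq\mathcal L^{p_r}$ and has $\|f\|_2=\|h\|=1$) and using $\Theta_A^*\Theta_A=I$ so that $Tf=\Theta_B h$, we obtain, for all $r\in(0,1)$,
\begin{align*}
\Big(\int_\Delta\|B_\beta h\|^{q_r}\,d\nu(\beta)\Big)^{1/q_r}\ \le\ c^{\,r}\,\Big(\int_\Omega\|A_\alpha h\|^{p_r}\,d\mu(\alpha)\Big)^{1/p_r}.
\end{align*}
Writing $F(p)=\int_\Omega\|A_\alpha h\|^p\,d\mu$ and $G(q)=\int_\Delta\|B_\beta h\|^q\,d\nu$ and taking logarithms, this reads $\tfrac1{q_r}\log G(q_r)-\tfrac1{p_r}\log F(p_r)\le r\log c$. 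The Parseval identity gives $F(2)=G(2)=1$, so both sides vanish at $r=0$; dividing by $r>0$ and letting $r\to0^+$ gives a derivative inequality at $r=0$. Using $q_r'(0)=2$, $p_r'(0)=-2$, $F(2)=G(2)=1$, together with $F'(2)=\int_\Omega\|A_\alpha h\|^2\log\|A_\alpha h\|\,d\mu=-\tfrac12 S_A(h)$ and $G'(2)=-\tfrac12 S_B(h)$, a short computation identifies the left-hand side of that derivative inequality as $-\tfrac12\big(S_A(h)+S_B(h)\big)$, whence $S_A(h)+S_B(h)\ge-2\log c$, which is the asserted lower bound.

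The step I expect to be the main obstacle is making the last paragraph rigorous, i.e. justifying that $F$ and $G$ are differentiable at $p=2$ and that their derivatives are obtained by differentiating under the integral sign. This is a dominated-convergence argument: for $p$ in a neighbourhood of $2$ one dominates $\big|\partial_p\|A_\alpha h\|^p\big|=\big|\log\|A_\alpha h\|\big|\,\|A_\alpha h\|^p$ by a fixed $\mathcal L^1(\Omega,\mu)$ function, exploiting that $t\mapsto t^{s}\,|\log t|$ is bounded on $(0,1]$ for $s>0$, that $\|A_\alpha h\|\in\mathcal L^2(\Omega,\mu)$, and that $\mu(\Omega)<\infty$ (so bounded functions are integrable and $\mathcal L^2\subseteq\mathcal L^p$ for $p\le2$); the finiteness of $S_A(h)$ and $S_B(h)$ established in the upper-bound step is exactly what makes the resulting one-sided derivatives finite real numbers. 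The degenerate cases $c=0$, $c=\infty$, and $\mathcal H_A\cap\mathcal H_B=\emptyset$ are handled separately, the statement being either trivial or vacuous in each.
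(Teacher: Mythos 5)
Your proof is correct and follows essentially the same route as the paper: the same transfer operator $(Tf)(\beta)=\int_\Omega B_\beta A_\alpha^* f(\alpha)\,d\mu(\alpha)$ with the $\mathcal{L}^2\to\mathcal{L}^2$ and $\mathcal{L}^1\to\mathcal{L}^\infty$ endpoint bounds, Riesz--Thorin interpolation with $p_r=\tfrac{2}{1+r}$, $q_r=\tfrac{2}{1-r}$, the test function $f(\alpha)=A_\alpha h$, and the limit $r\to 0^+$ (you compute the derivative at $r=0$ via dominated convergence where the paper invokes L'H\^opital). The only additions are your explicit Jensen argument for the upper bound, which the paper does not spell out, and the more careful justification of the limiting step.
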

\begin{proof}
Define 
\begin{align*}
	T:	\mathcal{L}^{2}(\Omega, \mathcal{H}_0)+	\mathcal{L}^{1}(\Omega, \mathcal{H}_0) \to  \mathcal{L}^{2}(\Delta, \mathcal{H}_0)+	\mathcal{L}^{\infty}(\Delta, \mathcal{H}_0)
\end{align*}
by 
\begin{align*}
	&T:	\mathcal{L}^{2}(\Omega, \mathcal{H}_0) \ni f \mapsto Tf \in \mathcal{L}^{2}(\Delta, \mathcal{H}_0); \quad (Tf)(\beta)\coloneqq \int\limits_{\Omega}B_\beta A_\alpha^*(f(\alpha))\,d\mu(\alpha), \quad \forall \beta \in \Delta,\\
	&T:	\mathcal{L}^{1}(\Omega, \mathcal{H}_0)\ni g \mapsto Tg \in \mathcal{L}^{\infty}(\Delta, \mathcal{H}_0); \quad (Tg)(\beta)\coloneqq \int\limits_{\Omega}B_\beta A_\alpha^*(g(\alpha))\,d\mu(\alpha), \quad \forall \beta \in \Delta.
\end{align*}	
Let $f \in \mathcal{L}^{2}(\Omega, \mathcal{H}_0)$. Then 
\begin{align*}
	\|Tf\|^2_{\mathcal{L}^{2}(\Delta, \mathcal{H}_0)}&=\int\limits_{\Delta}\|(Tf)(\beta)\|^2\,d\nu(\beta)=\int\limits_{\Delta}\left\|\int\limits_{\Omega}B_\beta A_\alpha^*(f(\alpha))\,d\mu(\alpha)\right\|^2\,d\nu(\beta)\\
	&=\int\limits_{\Delta}\left\|B_\beta\left(\int\limits_{\Omega} A_\alpha^*(f(\alpha))\,d\mu(\alpha)\right)\right\|^2\,d\nu(\beta)=\left\|\int\limits_{\Omega} A_\alpha^*(f(\alpha))\,d\mu(\alpha)\right\|^2\\
	&=\sup_{h \in \mathcal{H}, \|h\|\leq1}\left|\left\langle \int\limits_{\Omega} A_\alpha^*(f(\alpha))\,d\mu(\alpha), h\right\rangle \right|^2=\sup_{h \in \mathcal{H}, \|h\|\leq1}\left|\ \int\limits_{\Omega} \langle f(\alpha), A_\alpha h\rangle \,d\mu(\alpha)\right|^2\\
	&\leq \sup_{h \in \mathcal{H}, \|h\|\leq1}\left(\ \int\limits_{\Omega} \|f(\alpha)\|^2 \,d\mu(\alpha)\right)\left(\ \int\limits_{\Omega} \|A_\alpha h\|^2 \,d\mu(\alpha)\right)=\|f\|_{\mathcal{L}^{2}(\Omega, \mathcal{H}_0)}^2.
\end{align*}
Let $g \in \mathcal{L}^{1}(\Omega, \mathcal{H}_0)$. Then for any $\beta \in \Delta$, we have 
\begin{align*}
	\|(Tg)(\beta)\|&=\left\|\int\limits_{\Omega}B_\beta A_\alpha^*(g(\alpha))\,d\mu(\alpha)\right\|\leq \int\limits_{\Omega}\|B_\beta A_\alpha^*(g(\alpha))\|\,d\mu(\alpha)\\
	&\leq \int\limits_{\Omega}\|B_\beta A_\alpha^*\|\|(g(\alpha))\|\,d\mu(\alpha)\leq \left(\displaystyle \sup_{\alpha \in \Omega, \beta \in \Delta}\|B_\beta A_\alpha^*\|\right)\int\limits_{\Omega}\|(g(\alpha))\|\,d\mu(\alpha)\\
	&=\left(\displaystyle \sup_{\alpha \in \Omega, \beta \in \Delta}\|B_\beta A_\alpha^*\|\right)	\|g\|^2_{\mathcal{L}^{1}(\Delta, \mathcal{H}_0)}.
\end{align*}
Hence 
\begin{align*}
		\|Tg\|_{\mathcal{L}^{\infty}(\Delta, \mathcal{H}_0)}\leq \left(\displaystyle \sup_{\alpha \in \Omega, \beta \in \Delta}\|B_\beta A_\alpha^*\|\right)	\|g\|^2_{\mathcal{L}^{1}(\Delta, \mathcal{H}_0)}.
\end{align*}
Define 
\begin{align*}
	M\coloneqq \displaystyle \sup_{\alpha \in \Omega, \beta \in \Delta}\|B_\beta A_\alpha^*\|.
\end{align*}
Now applying Theorem \ref{RT} for the case 
\begin{align*}
	p_0=q_0=2, \quad p_1=1, \quad p_2=\infty
\end{align*}
we get that the operator 
\begin{align*}
	T:	\mathcal{L}^{p_r}(\Omega, \mathcal{H}_0)\to \mathcal{L}^{q_r}(\Delta, \mathcal{H}_0)
\end{align*}
is bounded for every $0<r<1$, where $p_r$ and $q_r$ are given by 
\begin{align*}
	&\frac{1}{p_r}\coloneqq \frac{1-r}{2}+\frac{r}{1}\implies p_r=\frac{2}{r+1}, \\
	&  \frac{1}{q_r}\coloneqq \frac{1-r}{2}+\frac{r}{\infty}\implies q_r=\frac{2}{1-r}.
\end{align*}
Therefore 
\begin{align*}
		T:	\mathcal{L}^{\frac{2}{r+1}}(\Omega, \mathcal{H}_0)\to \mathcal{L}^{\frac{2}{1-r}}(\Delta, \mathcal{H}_0)
	\end{align*}
is a bounded linear operator for every  $0<r<1$. Moreover, Theorem \ref{RT} also gives 
	\begin{align*}
		\|T\|_{	\mathcal{L}^{\frac{2}{r+1}}(\Omega, \mathcal{H}_0)\to \mathcal{L}^{\frac{2}{1-r}}(\Delta, \mathcal{H}_0)}\leq 	M^r, \quad \forall 0<r<1.
\end{align*}
i.e., 
	\begin{align*}
	\|Tf\|_{\mathcal{L}^{\frac{2}{1-r}}(\Delta, \mathcal{H}_0)}\leq 	M^r	\|f\|_{\mathcal{L}^{\frac{2}{r+1}}(\Omega, \mathcal{H}_0)}, \quad \forall 0<r<1, \forall f \in \mathcal{L}^{\frac{2}{r+1}}(\Omega, \mathcal{H}_0).
\end{align*}
Let $h \in \mathcal{H}_A \cap \mathcal{H}_B$ be fixed. Define 
\begin{align*}
	f:\Omega \ni \alpha \mapsto f(\alpha)\coloneqq A_\alpha h \in \mathcal{H}_0.
\end{align*}
Then $f \in 	\mathcal{L}^{2}(\Omega, \mathcal{H}_0)$. Since $\mu(\Omega)<\infty$ is a finite and $0<r<1$, we have $\frac{2}{r+1}<2$.
But then  $\mathcal{L}^{2}(\Omega, \mathcal{H}_0)\subseteq \mathcal{L}^{\frac{2}{r+1}}(\Omega, \mathcal{H}_0)$. Therefore  $f \in \mathcal{L}^{\frac{2}{r+1}}(\Omega, \mathcal{H}_0)$. We now find 
\begin{align*}
&\|Tf\|_{\mathcal{L}^{\frac{2}{1-r}}(\Delta, \mathcal{H}_0)}=\left(\int\limits_{\Delta}\|(Tf)(\beta)\|^{\frac{2}{1-r}}\,d\nu(\beta)\right)^\frac{1-r}{2}=\left(\int\limits_{\Delta}\left\| \int\limits_{\Omega}B_\beta A_\alpha^*(f(\alpha))\,d\mu(\alpha)\right\|^{\frac{2}{1-r}}\,d\nu(\beta)\right)^\frac{1-r}{2}\\
&=\left(\int\limits_{\Delta}\left\| \int\limits_{\Omega}B_\beta A_\alpha^*A_\alpha h\,d\mu(\alpha)\right\|^{\frac{2}{1-r}}\,d\nu(\beta)\right)^\frac{1-r}{2}
=\left(\int\limits_{\Delta}\left\|B_\beta\left(\int\limits_{\Omega} A_\alpha^* A_\alpha h\,d\mu(\alpha)\right)\right\|^{\frac{2}{1-r}}\,d\nu(\beta)\right)^\frac{1-r}{2}\\
&=\left(\int\limits_{\Delta}\left\|B_\beta h\right\|^{\frac{2}{1-r}}\,d\nu(\beta)\right)^\frac{1-r}{2}.
\end{align*} 
By substituting the above value in 
\begin{align*}
\|Tf\|_{\mathcal{L}^{\frac{2}{1-r}}(\Delta, \mathcal{H}_0)}\leq 	M^r	\|f\|_{\mathcal{L}^{\frac{2}{r+1}}(\Omega, \mathcal{H}_0)}, \quad \forall 0<r<1,	
\end{align*}
we get 
\begin{align*}
	\left(\int\limits_{\Delta}\left\|B_\beta h\right\|^{\frac{2}{1-r}}\,d\nu(\beta)\right)^\frac{1-r}{2}\leq M^r \left(\int\limits_{\Omega}\left\|A_\alpha h\right\|^{\frac{2}{r+1}}\,d\mu(\alpha)\right)^\frac{r+1}{2}, \quad \forall 0<r<1.
\end{align*}
Previous inequality gives
\begin{align*}
\left(\int\limits_{\Omega}\left\|A_\alpha h\right\|^{\frac{2}{r+1}}\,d\mu(\alpha)\right)^\frac{-r-1}{2}\left(\int\limits_{\Delta}\left\|B_\beta h\right\|^{\frac{2}{1-r}}\,d\nu(\beta)\right)^\frac{1-r}{2}\leq M^r, \quad \forall 0<r<1.	 
\end{align*}
By raising to the power $2/r$, we get 
\begin{align*}
	\left(\int\limits_{\Omega}\left\|A_\alpha h\right\|^{\frac{2}{r+1}}\,d\mu(\alpha)\right)^\frac{-r-1}{r}\left(\int\limits_{\Delta}\left\|B_\beta h\right\|^{\frac{2}{1-r}}\,d\nu(\beta)\right)^\frac{1-r}{r}\leq M^2, \quad \forall 0<r<1.	 
\end{align*}
Taking logarithm gives 
\begin{align*}
\frac{-r-1}{r} \log 	\left(\int\limits_{\Omega}\left\|A_\alpha h\right\|^{\frac{2}{r+1}}\,d\mu(\alpha)\right)+\frac{1-r}{r}\log \left(\int\limits_{\Delta}\left\|B_\beta h\right\|^{\frac{2}{1-r}}\,d\nu(\beta)\right)\leq 2 \log M, \quad \forall 0<r<1.
\end{align*}
Since the map 
\begin{align*}
	(0,r)\ni r \mapsto \frac{-r-1}{r} \log 	\left(\int\limits_{\Omega}\left\|A_\alpha h\right\|^{\frac{2}{r+1}}\,d\mu(\alpha)\right)+\frac{1-r}{r}\log \left(\int\limits_{\Delta}\left\|B_\beta h\right\|^{\frac{2}{1-r}}\,d\nu(\beta)\right) \in \mathbb{R}
\end{align*}
is differentiable, we can find its limiting value as $r\to 0$ using  L Hopital's rule. By doing so we get 
\begin{align*}
\int\limits_{\Omega}\|A_\alpha h\|^2\log \|A_\alpha h \|^2\, d\mu(\alpha)+\int\limits_{\Delta}\|B_\beta h\|^2\log \|B_\beta h \|^2\, d\nu(\beta)	\leq 2 \log M.
\end{align*}
By using the definition of entropy, we get the theorem.
\end{proof}
Motivated from the application of entropic uncertainty to finite groups by Krishna and Parthasarathy  \cite{KRISHNAPARTHASARATHY} and from the theory of group-frames for compact groups by Iverson \cite{IVERSON} we now give an application of Theorem \ref{UPG} to the compact groups. First we recall basic Peter-Weyl theory of unitary representations of compact groups. More details can be found in \cite{FOLLAND, SIMON}.

Let $G$ be a compact group. Let $\widehat{G}$ be the dual group of $G$ consisting of all non-equivalent irreducible unitary representations of $G$.  For $\pi \in \widehat{G}$, let $d_\pi$ be the dimension of the representation space of   $\pi$. For each $\pi \in \widehat{G}$, let $\{\pi_{i,j}\}_{1\leq i, j \leq d_\pi}$ be the matrix elements of $\pi$ in some orthonormal basis for its representation space. Following is the celebrated Peter-Weyl theorem.
\begin{remark}\cite{FOLLAND, SIMON}
Let $G$ be a compact group. The set 
\begin{align*}
\{\sqrt{d_\pi}\pi_{i,j}: 1\leq i, j \leq d_\pi, \pi \in \widehat{G}\}	
\end{align*}
is an orthonormal basis for $\mathcal{L}^2(G)$. 
\end{remark}
Let $H$ be any locally compact group with Haar measure $\nu$. Assume that there is a unitary representation $\rho: H \to \mathcal{B}(\mathcal{L}^2(G))$ (need not be irreducible)  and a   function $\phi \in \mathcal{L}^2(G)$  such that $\{\rho_h \phi\}_{h \in H}$ is a continuous Parseval frame for $\mathcal{L}^2(G)$  (such frames are known as group-frames, see \cite{IVERSON}). Define 
\begin{align*}
	&A_ {i,j,\pi}: \mathcal{L}^2(G) \ni f \mapsto d_\pi\langle f,  \pi_{i,j,\pi} \rangle \pi_{i,j,\pi} \in \mathcal{L}^2(G), \quad \forall 1\leq i, j \leq d_\pi, \pi \in \widehat{G},\\
	&B_h:\mathcal{L}^2(G) \ni f \mapsto \langle f, \rho_h \phi\rangle \rho_h \phi \in \mathcal{L}^2(G), \quad \forall h \in H.
\end{align*}
Let $f \in \mathcal{L}^2(G)_A\cap \mathcal{L}^2(G)_B$. 
Now by applying Theorem  \ref{UPG}, we get 
\begin{align*}
&-\sum_{1\leq i, j \leq d_\pi, \pi \in \widehat{G}}\|A_{i,j,\pi}f\|^2	\log \|A_{i,j,\pi}f \|^2-\int\limits_{H}\|B_h f\|^2\log \|B_hf \|^2\, d\nu(h)	\geq \\
&-2 \log \left(\displaystyle \sup_{1\leq i, j \leq d_\pi, \pi \in \widehat{G}, h \in H}\|B_h A_{i,j,\pi}^*\|\right).
\end{align*}
Using the definition of operators $A_ {i,j,\pi}$ and $B_h$  in the above equation gives the following corollary.
\begin{corollary}\label{C}
Under the set up defined earlier, we have 
\begin{align*}
	-&\sum_{1\leq i, j \leq d_\pi, \pi \in \widehat{G}}d_\pi|\langle f,  \pi_{i,j,\pi} \rangle|^2	\log  \left(d_\pi|\langle f,  \pi_{i,j,\pi} \rangle|^2\right)-\int\limits_{H} |\langle f, \rho_h \phi\rangle|^2\log | \langle f, \rho_h \phi\rangle|^2\, d\nu(h)	\geq \\
	&-2 \log \left(\displaystyle \sup_{1\leq i, j \leq d_\pi, \pi \in \widehat{G}, h \in H}\|B_h A_{i,j,\pi}\|\right).
\end{align*}
\end{corollary}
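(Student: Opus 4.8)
The plan is to derive Corollary~\ref{C} as a direct specialization of Theorem~\ref{UPG}, so the main task is to verify that the two families of operators defined above actually form continuous operator-valued Parseval frames and then to unwind the notation. First I would check that $\{A_{i,j,\pi}\}$, indexed by the discrete set $\Omega = \{(i,j,\pi): 1\le i,j\le d_\pi,\ \pi\in\widehat G\}$ with counting measure, is a continuous operator-valued Parseval frame in $\mathcal{B}(\mathcal{L}^2(G))$: since $\{\sqrt{d_\pi}\,\pi_{i,j}\}$ is an orthonormal basis for $\mathcal{L}^2(G)$ by Peter--Weyl, each $A_{i,j,\pi}$ is the rank-one projection onto the span of $\sqrt{d_\pi}\,\pi_{i,j,\pi}$ (note $A_{i,j,\pi}f = d_\pi\langle f,\pi_{i,j,\pi}\rangle\pi_{i,j,\pi} = \langle f,\sqrt{d_\pi}\pi_{i,j,\pi}\rangle\sqrt{d_\pi}\pi_{i,j,\pi}$), so Parseval's identity gives $\sum \|A_{i,j,\pi}f\|^2 = \sum d_\pi|\langle f,\pi_{i,j,\pi}\rangle|^2 = \|f\|^2$. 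Likewise $\{B_h\}_{h\in H}$ is by hypothesis built from a continuous Parseval frame $\{\rho_h\phi\}_{h\in H}$, i.e. $\int_H |\langle f,\rho_h\phi\rangle|^2\,d\nu(h) = \|f\|^2$, and $B_h$ is the rank-one operator $f\mapsto \langle f,\rho_h\phi\rangle\rho_h\phi$, so $\|B_hf\|^2 = |\langle f,\rho_h\phi\rangle|^2$ (using $\|\rho_h\phi\|=\|\phi\|$ and the normalization forcing $\|\phi\|=1$), whence $\int_H \|B_hf\|^2\,d\nu(h) = \|f\|^2$ and weak measurability is immediate.

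Next I would invoke Theorem~\ref{UPG} with these two frames. There is one hypothesis to be careful about: Theorem~\ref{UPG} requires both index measure spaces to be \emph{finite}. For $\Omega$ with counting measure this forces $\widehat G$ to be finite, equivalently $G$ to be finite; more honestly, the cleaner route is to observe that $\mu(\Omega) = \sum_\pi d_\pi^2$ need not be finite for infinite compact $G$, so the left-hand (upper) bound $\log(\mu(\Omega)\nu(\Delta))$ degenerates, but the \emph{lower} bound on $S_A(f)+S_B(f)$ — which is all that Corollary~\ref{C} asserts — survives: re-examining the proof of Theorem~\ref{UPG}, the finiteness of $\mu(\Omega)$ was used only to guarantee $\mathcal{L}^2(\Omega,\mathcal{H}_0)\subseteq \mathcal{L}^{2/(r+1)}(\Omega,\mathcal{H}_0)$, and for $f\in\mathcal{L}^2(G)_A$ the function $\alpha\mapsto A_\alpha f$ lies in every $\mathcal{L}^p$ with $p\le 2$ on the counting-measure space anyway because $\ell^2\subseteq\ell^p$ for $p\ge\ldots$ — wait, that inclusion goes the wrong way; I would instead simply restrict attention to finite $\widehat G$, or note that Corollary~\ref{C} is stated ``under the set up defined earlier'' which we may take to include whatever finiteness is needed. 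In the write-up I would keep this honest by assuming $\sum_\pi d_\pi^2 <\infty$ and $\nu(H)<\infty$, exactly as the displayed application preceding the corollary tacitly does.

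Granting the hypotheses, the conclusion of Theorem~\ref{UPG} reads
\[
S_A(f)+S_B(f)\ \ge\ -2\log\Bigl(\sup_{(i,j,\pi),\,h}\|B_h A_{i,j,\pi}^*\|\Bigr),
\]
and it remains only to substitute the explicit forms of $S_A(f)$ and $S_B(f)$. By definition~\eqref{OKP}, $S_A(f) = -\sum_{i,j,\pi}\|A_{i,j,\pi}f\|^2\log\|A_{i,j,\pi}f\|^2 = -\sum_{i,j,\pi} d_\pi|\langle f,\pi_{i,j,\pi}\rangle|^2\log\bigl(d_\pi|\langle f,\pi_{i,j,\pi}\rangle|^2\bigr)$, and $S_B(f) = -\int_H |\langle f,\rho_h\phi\rangle|^2\log|\langle f,\rho_h\phi\rangle|^2\,d\nu(h)$, which are precisely the two terms on the left of the corollary. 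For the right-hand side, since each $A_{i,j,\pi}$ is a self-adjoint projection, $A_{i,j,\pi}^* = A_{i,j,\pi}$, so $\|B_h A_{i,j,\pi}^*\| = \|B_h A_{i,j,\pi}\|$, matching the stated bound. The main obstacle is thus not analytic depth but bookkeeping: correctly identifying $A_{i,j,\pi}$ as $\langle\,\cdot\,,\sqrt{d_\pi}\pi_{i,j,\pi}\rangle\sqrt{d_\pi}\pi_{i,j,\pi}$ so that Parseval applies, and being scrupulous about the finiteness hypothesis on the index spaces when $G$ is infinite. Once those are pinned down, the corollary follows by pure substitution.
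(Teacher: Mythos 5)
Your proposal follows essentially the same route as the paper: the paper obtains Corollary \ref{C} exactly by specializing Theorem \ref{UPG} to the two rank-one Parseval frames $\{A_{i,j,\pi}\}$ and $\{B_h\}$ and then substituting their definitions (with $A_{i,j,\pi}^*=A_{i,j,\pi}$), which is what you do. Your extra scrutiny of the hypotheses --- verifying the Parseval property via Peter--Weyl, the normalization $\|\phi\|=1$ needed so that $\|B_hf\|^2=|\langle f,\rho_h\phi\rangle|^2$, and the finiteness requirements $\sum_{\pi}d_\pi^2<\infty$, $\nu(H)<\infty$ demanded by Theorem \ref{UPG} --- goes beyond the paper, which applies the theorem without comment on these points, so your cautions are a strengthening rather than a deviation.
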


Note that Theorem \ref{UPG} works for arbitrary collections but for projections, Theorem \ref{KPU}  gives stronger lower bound (in the discrete case).
\section{Acknowledgments}
Author thanks Prof. B. V. Rajarama Bhat for several suggestions.\\
Author  thanks the anonymous reviewer for his/her   study of the manuscript and for a future direction of research on entropic uncertainty relations. The author is supported by Indian Statistical Institute, Bangalore, through the J. C. Bose Fellowship of Prof. B. V. Rajarama Bhat. He thanks Prof. B. V. Rajarama Bhat for the Post Doc position.

 \bibliographystyle{plain}
 \bibliography{reference.bib}

\end{document}